\numberwithin{equation}{section}
\newtheorem{theorem}[equation]{Theorem}
\newtheorem{lemma}[equation]{Lemma}
\newtheorem{proposition}[equation]{Proposition}
\newtheorem{corollary}[equation]{Corollary}
\theoremstyle{definition}
\newtheorem{definition}[equation]{Definition}
\DeclareMathOperator{\Hom}{Hom}
\DeclareMathOperator{\Tor}{Tor}
\DeclareMathOperator{\colim}{colim}
\DeclareMathOperator{\im}{im}
\DeclareMathOperator{\coker}{coker}
\DeclareMathOperator{\ghdim}{gh.dim.}
\DeclareMathOperator{\wdim}{w. dim.}
\DeclareMathOperator{\Rdim}{Rouq. dim.}
\DeclareMathOperator{\pdim}{proj. dim.}
\DeclareMathOperator{\fdim}{flat \  dim.}
\DeclareMathOperator{\cfdim}{con.\ flat \  dim.}
\newcommand{\class}[1]{\langle #1 \rangle}
\newcommand{\Q}{\mathbb{Q}}
\newcommand{\cat}[1]{\mathcal{#1}}
\newcommand{\Z}{\mathbb{Z}}
\newcommand{\mathcolon}{\colon\,}
\newcommand{\uc}{\textup{:}}
\newcommand{\ulp}{\textup{(}}
\newcommand{\urp}{\textup{)}}
\begin{document}

\title{The ghost and weak dimensions of rings and ring spectra} 

\date{\today}

\author{Mark Hovey}
\address{Department of Mathematics \\ Wesleyan University
\\ Middletown, CT 06459}
\email{hovey@member.ams.org}

\author{Keir Lockridge}
\address{Department of Mathematics \\ Wake Forest University \\
Winston-Salem, NC 27109}
\email{lockrikh@wfu.edu}


\begin{abstract}
The primary object of this paper is to prove the conjecture of
\cite{hovey-lockridge-ghost} explaining how to recover the weak
dimension of a ring from its derived category.  In the process, we
develop a theory of weak dimension, which we call ghost dimension, for
the generalized rings, known as ring spectra, that arise in
algebraic topology.
\end{abstract}

\maketitle

\section*{Introduction}

In a previous paper~\cite{hovey-lockridge-ghost}, the authors
considered the problem of recovering the weak dimension of a ring $R$
from the derived category $\cat{D} (R)$, together with its
distinguished object $R$.  In that paper, the authors defined the
\textbf{ghost dimension} of $R$, $\ghdim R$, and proved that $\ghdim
R\geq \wdim R$, with equality holding when $R$ is coherent or has weak
dimension $1$.  In the present paper, we prove that $\ghdim R=\wdim R$
for all rings $R$.  

The point of doing this, besides its intrinsic interest, is to allow
consideration of weak dimension for more general kinds of rings.  In
algebraic topology, for example, there is a notion of a ring spectrum,
or, more precisely, an $S$-algebra
$E$~\cite{elmendorf-kriz-mandell-may}.  Such an $S$-algebra has no
elements in the usual sense.  There is a category of (right)
$E$-modules, but it is not abelian.  However, there is a derived
category $\cat{D} (E)$ of $E$, and it shares many of the formal
properties of the derived category $\cat{D} (R)$ of an ordinary ring
$R$; in particular, $\cat{D} (E)$ is a compactly generated
triangulated category, and there are derived tensor products and
derived Hom objects.  In fact, every ring $R$ has an associated
Eilenberg-MacLane $S$-algebra $HR$, and $\cat{D} (HR)$ is equivalent
to $\cat{D} (R)$.  To define invariants of such $S$-algebras $E$,
then, one way to proceed is to define usual ring invariants, such as
the weak or global dimension, in terms of $\cat{D} (R)$, and then
apply this definition to $\cat{D} (E)$ as well.

The second author did this for the (right) global dimension in his
thesis, and we now summarize this.  For further details,
see~\cite{hovey-lockridge-ghost}.  Define a map $f\mathcolon
X\xrightarrow{}Y$ in $\cat{D} (E)$ to be a \textbf{ghost} if $\cat{D}
(E) (E,f)_{*}=0$.  In the case that $E$ is an ordinary ring $R$, a
ghost is then just a map that induces the zero homomorphism on
homology.  If $E$ is an $S$-algebra, a ghost is a map that induces the
zero homomorphism on homotopy groups.  The second author shows that
the right global dimension of a ring $R$ is the least $n$ for which
every composite of $n+1$ ghosts in $\cat{D} (R)$ is null, or $\infty$
if there is are arbitrarily long nonzero composites of ghosts.  We can
then define the global dimension of an $S$-algebra $E$ in an analogous
fashion.  The authors did this and investigated the $S$-algebras of
global dimension $0$ in~\cite{hovey-lockridge-semisimple}.

Weak dimension is more complicated, and there seem to be many possible
definitions.  A major goal of this paper is to elucidate the different
possibilities and to find the correct one.  The \textbf{ghost
dimension} of an $S$-algebra $E$ or a ring $R$ is the least $n$ such
that every composite
\[
X_{0}\xrightarrow{f_{1}}X_{1}\xrightarrow{f_{2}}\dotsb
\xrightarrow{f_{n+1}} X_{n+1}
\]
of $n+1$ ghosts in $\cat{D} (E)$ (or $\cat{D} (R)$), where $X_{0}$ is
a compact object, is null (or $\infty$ if there are arbitrarily long
nonzero composites of ghosts out of compact objects).  Recall that $X$
is compact in a triangulated category $\cat{C}$ if the functor
$\cat{C} (X,-)$ preserves coproducts.  In particular, the compact
objects of $\cat{D} (R)$ are the perfect complexes (complexes
quasi-isomorphic to bounded complexes of finitely generated
projectives), and the compact objects of $\cat{D} (E)$ are the
retracts of finite cell $E$-modules.  The ghost dimension of a ring
was discussed in~\cite{hovey-lockridge-ghost}, as mentioned above.

In addition, a version of weak dimension closely related to Rouquier's
definition of the dimension of a triangulated category~\cite{rouquier}
is defined in~\cite{hovey-lockridge-semisimple}.  Neither of these
uses the notion of a flat $E$-module.  This obvious oversight was made
because of a difficulty with flat modules that we now recall.  If $E$
is an $S$-algebra, and $F_{*}$ is a flat left $E_{*}=\pi_{*}E=\cat{D}
(E) (E,E)_{*}$-module, then we can form a homology theory (a
coproduct-preserving exact functor to abelian groups) on $\cat{D} (E)$
that takes $M$ to $M_{*}\otimes_{E_{*}}F_{*}$.  One would like to say
that Brown representability for homology theories then forces there to
be a left $E$-module $F$ with $\pi_{*}F\cong F_{*}$.  Unfortunately,
Brown representability does not hold for a general ring
spectrum~\cite{neeman-brown, christensen-keller-neeman}, so flat
modules may not be realizable.  This worrying phenomenon led us to
doubt the utility of flat modules.  However, we use them in this
paper.  One of the surprising things we discover is the following.
Call $X\in \cat{D} (E)$ \textbf{flat} if $X_{*}$ is a flat
$E_{*}$-module.  Then we prove that $X$ is flat if and only if every
ghost $f$ whose domain is $X$ is phantom, in the sense that $\cat{D}
(E) (A,f)=0$ for all compact $A\in \cat{D} (E)$.  This gives us two
different notions of flat dimension.  The one most similar to the
algebraic situation we call the \textbf{constructible flat dimension},
$\cfdim X$.  It is a measure of how many steps one needs to construct
$X$ from flat objects of $\cat{D} (E)$.  We reserve the term
\textbf{flat dimension}, $\fdim X$, for the smallest $n$ such that
every composite of $n+1$ ghosts with domain $X$ is phantom.  This
seems algebraically strange, but has better properties.  This gives
several more notions of weak dimension: the maximal constructible flat
(resp. flat) dimension of a compact $E$-module, and the maximal
constructible flat (resp. flat) dimension of an arbitrary $E$-module.
We show that all of these are equal to the ghost dimension, except
possibly the maximal constructible flat dimension of an arbitrary
$E$-module.  The conjecture of~\cite{hovey-lockridge-ghost} that
$\ghdim R=\wdim R$ then follows.

In the end, we are left with three definitions of weak dimension for
an $S$-algebra $E$.  There is $\ghdim E$, which coincides with the
maximal flat dimension of any object.  There is the maximal
constructible flat dimension of any object, which agrees with $\ghdim
E$ for $E=HR$, but possibly not in general.  And there is the Rouquier
dimension $\Rdim E$, which agrees with $\ghdim E$ when $E_{*}$ is
coherent.  We prove that the ghost dimension is right-left symmetric,
which we have been unable to do with any of the other definitions.
Hence we argue that the ghost dimension is the proper version of weak
dimension for $S$-algebras $E$.  

This subject sorely needs examples, in order to be sure that all these
definitions are in fact distinct.  It should be possible to find an
ordinary ring $R$ such that the Rouqier dimension of $\cat{D} (R)$ is
distinct from the other dimensions.  Such an example would involve
serious analysis of the derived category of a non-coherent ring.  To
determine whether the constructible flat dimension is different from
the flat dimension would seem to require a new idea.  


Note that all modules we use in this paper are right modules unless
explicitly stated otherwise.  The reader who is interested only in
ordinary rings can read $R$ everywhere the symbol $E$ or $E_{*}$
appears, read ``chain complex of $R$-modules'' whenever the term
``$E$-module'' appears, and read $H_{*}X$ everywhere $X_{*}$ appears,
for $X$ an $E$-module.  

\section{Ghost dimension and Rouquier dimension}\label{sec-ghost}

For an $S$-algebra $E$ or a ring $R$, the authors have previously
considered two different possible definitions related to weak
dimension, which we now discuss.  First of all, we can define the
Rouquier dimension to be the maximum number of steps needed to build a
compact object of $\cat{D} (E)$ from finitely many copies of $E$
(along the lines of Rouquier~\cite{rouquier}).  In more detail, given
a class $\cat{A}$ of objects of $\cat{D} (E)$, define
$\class{\cat{A}}^{n}$ inductively as follows.  Define
$\class{\cat{A}}^{0}$ to be the collection of all retracts of
coproducts of suspensions of elements of $\cat{A}$, and define an
object $X$ to be in $\class{\cat{A}}^{n}$ if and only if it is a
retract of an object $\widetilde{X}$ for which there is an exact
triangle
\[
A \xrightarrow{}Y \xrightarrow{}\widetilde{X} \xrightarrow{} A
\]
where $A\in \class{\cat{A}}^{0}$, and $Y\in \class{\cat{A}}^{n-1}$.
If $\cat{A}$ is a class of compact objects, we define
$\class{\cat{A}}^{n}_{f}$ similarly, with $\class{\cat{A}}^{0}_{f}$ being
the collection of all retracts of \emph{finite} coproducts of
suspensions of elements of $\cat{A}$, and then using the same
induction procedure to define $\class{\cat{A}}^{n}_{f}$.  Then
$\class{\cat{A}}^{n}_{f}$ consists of compact objects in
$\class{\cat{A}}^{n}$, but there may be compact objects in
$\class{\cat{A}}^{n}$ that are nevertheless not in
$\class{\cat{A}}^{n}_{f}$.  

We define the \textbf{Rouquier dimension} of $E$ (or $R$), $\Rdim E$,
to be the smallest $n$ such that $\class{E}^{n}_{f}$ is all of the
compact objects, or $\infty$ if no such $n$ exists.  This was called
the weak dimension in \cite{hovey-lockridge-semisimple}, but that
seems inappropriate, since we do not know that it agrees with the weak
dimension when $E$ is an ordinary ring $R$.  We define the
\textbf{ghost dimension} of $E$ (or $R$), $\ghdim E$, to be the
smallest $n$ such that $\class{E}^{n}$ contains all the compact
objects.  We also define the \textbf{projective dimension}, $\pdim X$,
of a given object $X$ to be the smallest $n$ such that $X\in
\class{E}^{n}$.  This was called the \textbf{ghost length}
in~\cite{hovey-lockridge-ghost}.  Then $\ghdim E$ is the supremum of
$\pdim X$ for $X$ compact.

The following proposition explains the connection to the definition
given in the introduction. 

\begin{proposition}\label{prop-proj-dim}
Suppose $E$ is an $S$-algebra or an ordinary ring, and $X\in \cat{D}
(E)$.  Then $\pdim X\leq n$ if and only if every composite of $n+1$
ghosts with domain $X$ is the zero map.  Furthermore, $\pdim X\leq
\pdim_{E_{*}} X_{*}$, with equality when $\pdim X=0$ and also when $E$
is an ordinary ring and $X$ is the projective resolution of a module
$M$.
\end{proposition}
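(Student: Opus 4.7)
My plan is to prove the three assertions in turn. For the ghost characterization I induct on $n$. The base case $n=0$ says $X\in\class{E}^{0}$ iff every ghost with domain $X$ is zero: one direction is immediate because a ghost out of a suspension of $E$ is zero by definition of ``ghost,'' and the converse will fall out of the universal tower constructed below. For the inductive step in the forward direction, suppose $X$ is a retract of $\widetilde{X}$ fitting in a triangle $A\to Y\to\widetilde{X}\to\Sigma A$ with $A\in\class{E}^{0}$ and $Y\in\class{E}^{n-1}$, and let $g_{n+1}\circ\cdots\circ g_{1}$ be a composite of $n+1$ ghosts from $\widetilde{X}$. Restricting $g_{n}\circ\cdots\circ g_{1}$ along $Y\to\widetilde{X}$ gives a composite of $n$ ghosts from $Y\in\class{E}^{n-1}$, which vanishes by induction; hence $g_{n}\circ\cdots\circ g_{1}$ factors through $\Sigma A$ as $\beta\circ\alpha$, and $g_{n+1}\circ\beta\colon\Sigma A\to X_{n+1}$ is a ghost out of $\Sigma A\in\class{E}^{0}$, so it vanishes by the base case, forcing the whole composite to be zero. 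For the reverse direction I build the \emph{universal ghost tower}: set $X_{0}=X$ and inductively take $G_{k}=\bigoplus_{g}\Sigma^{|g|}E$, one summand per generator of $\pi_{*}X_{k}$, with its tautological map $G_{k}\to X_{k}$, letting $X_{k+1}$ be the cofiber. Each $X_{k}\to X_{k+1}$ is a ghost by the long exact sequence, so the hypothesis forces $X_{0}\to X_{n+1}$ to be null. An octahedral induction applied to $X_{0}\to X_{k}\to X_{k+1}$ shows that the fiber $F_{k+1}$ of $X_{0}\to X_{k+1}$ sits in a triangle $F_{k}\to F_{k+1}\to G_{k}$, and so $F_{n+1}\in\class{E}^{n}$; since $X_{0}\to X_{n+1}=0$, $X_{0}$ splits off $F_{n+1}$, placing $X$ in $\class{E}^{n}$.

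For the inequality $\pdim X\leq\pdim_{E_{*}}X_{*}$, set $n=\pdim_{E_{*}}X_{*}$ and take a projective resolution $0\to P_{n}\to\cdots\to P_{0}\to X_{*}\to 0$. Free $E_{*}$-modules realize in $\cat{D}(E)$ as coproducts of suspensions of $E$, and since projective $E_{*}$-modules are retracts of free modules, the identification $\cat{D}(E)(\bigoplus\Sigma^{k_{j}}E,M)=\prod\pi_{k_{j}}M$ lets me lift both the splitting idempotents and the boundary maps: each $P_{i}$ is realized by $\widetilde{P}_{i}\in\class{E}^{0}$ and each $\widetilde{P}_{i}\to\widetilde{P}_{i-1}$ induces the original differential on $\pi_{*}$. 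I form the Postnikov-style tower $X=X_{0}$, $X_{i+1}=\mathrm{fib}(\widetilde{P}_{i}\to X_{i})$; the long exact sequence in $\pi_{*}$ identifies $\pi_{*}X_{i}$ with the $i$-th syzygy, so $X_{n}\simeq\widetilde{P}_{n}\in\class{E}^{0}$. Combining the triangles $X_{i+1}\to\widetilde{P}_{i}\to X_{i}$ via the octahedral ``closure under extensions'' lemma (an extension of an object in $\class{E}^{a}$ by one in $\class{E}^{b}$ lies in $\class{E}^{a+b+1}$), downward induction on $i$ gives $X_{i}\in\class{E}^{n-i}$, so $X=X_{0}\in\class{E}^{n}$.

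For the equality cases, if $\pdim X=0$ then $X$ is a retract of $\bigoplus\Sigma^{k_{j}}E$, so $X_{*}$ is a retract of a free $E_{*}$-module and hence projective, giving $\pdim_{E_{*}}X_{*}=0$. When $E=R$ and $X=P_{\bullet}$ is a projective resolution of a module $M$, we have $X\simeq M$ in $\cat{D}(R)$ and $X_{*}=M$, so the inequality yields $\pdim X\leq\pdim_{R}M$. For the converse I pick $N$ with $\Ext^{n}_{R}(M,N)\neq 0$ where $n=\pdim_{R}M$; a nonzero Ext class corresponds to a nonzero map $M\to\Sigma^{n}N$ in $\cat{D}(R)$, and the syzygy tower of $M$ coming from its projective resolution exhibits this map as an $n$-fold composite of ghosts (the connecting map out of each syzygy is a ghost because its predecessor is a projective in $\class{R}^{0}$). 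By the first assertion, $\pdim M\geq n$. The most delicate step is the reverse direction of the ghost characterization: producing the universal tower and verifying via octahedral that each $F_{k+1}\in\class{E}^{k}$; everything else is careful bookkeeping with the homotopy long exact sequence.
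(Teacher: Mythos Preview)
Your argument is correct and is essentially the standard one. The paper itself gives no proof here at all: it simply cites Christensen~\cite[Theorem~3.5]{christensen} for the ghost characterization and the authors' earlier paper~\cite{hovey-lockridge-ghost} (Proposition~1.1, Proposition~1.3, Lemma~1.4) for the remaining assertions. What you have written is a faithful reconstruction of those arguments: the universal ghost tower and the octahedral filtration of its fibers is exactly Christensen's proof, and the realization of a projective resolution together with the syzygy tower is exactly how the inequality $\pdim X\le\pdim_{E_{*}}X_{*}$ and the module case are handled in~\cite{hovey-lockridge-ghost}.

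Two small remarks. First, in your inequality argument you invoke an ``octahedral closure under extensions'' lemma to pass from $X_{i+1}\in\class{E}^{n-i-1}$ and $\widetilde{P}_{i}\in\class{E}^{0}$ to $X_{i}\in\class{E}^{n-i}$; note that the triangle $X_{i+1}\to\widetilde{P}_{i}\to X_{i}$ has the $\class{E}^{0}$ term in the \emph{middle}, not first, so you really are using the nontrivial associativity $\class{E}^{a}\ast\class{E}^{b}=\class{E}^{a+b+1}$ rather than the raw definition. That lemma is standard, but you could also bypass it entirely: once Part~1 is in hand, the maps $X_{i}\to\Sigma X_{i+1}$ are universal ghosts (any ghost out of $X_{i}$ factors through them since its restriction to $\widetilde{P}_{i}$ vanishes), and $X_{n}\in\class{E}^{0}$ forces the $(n{+}1)$-fold composite to be zero directly. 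Second, your parenthetical explanation ``the connecting map out of each syzygy is a ghost because its predecessor is a projective'' is slightly off: what matters is that $P_{i}\to Z_{i}$ is \emph{surjective} on homology (or, more simply, that a map between modules placed in different homological degrees is automatically a ghost), not that $P_{i}$ is projective. Neither point affects the correctness of your proof.
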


This proposition is the content of
Proposition~1.1, the proof of Proposition~1.3, and Lemma~1.4
of~\cite{hovey-lockridge-ghost}, although Proposition~1.1
of~\cite{hovey-lockridge-ghost} is really due to
Christensen~\cite[Theorem~3.5]{christensen}.

We commonly call the objects $P$ with $\pdim P=0$ \textbf{projective},
as this proposition implies $P$ is projective if and only if $P_{*}$
is a projective $E_{*}$-module.  We note that the universal
coefficient spectral sequence
of~\cite[Theorem~IV.4.1]{elmendorf-kriz-mandell-may} implies that if $P$
is projective then the natural map 
\[
\cat{D} (E) (P,X) \xrightarrow{} \Hom_{E_{*}} (P_{*},X_{*})
\]
is an isomorphism for all $X\in \cat{D} (E)$.  The converse is also
true, for if this natural map is an isomorphism, then there are no
nonzero ghosts with domain $P$.  

The following lemma gives the most obvious relationship between ghost
dimension and Rouquier dimension. 

\begin{lemma}\label{lem-Rouquier}
Suppose $E$ is an $S$-algebra or an ordinary ring.  Then 
\[
\ghdim E\leq \Rdim E,
\]
with equality holding when $\ghdim E=0$.  
\end{lemma}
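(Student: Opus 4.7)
The plan is to unwind the definitions and use compactness to handle the equality case.

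First, I would prove by induction on $n$ that $\class{E}^{n}_{f} \subseteq \class{E}^{n}$. For $n=0$, every finite coproduct of suspensions of $E$ is in particular a coproduct, so the base case is immediate. For the inductive step, if $X$ is a retract of some $\widetilde{X}$ fitting in an exact triangle $A \to Y \to \widetilde{X} \to \Sigma A$ with $A \in \class{E}^{0}_{f} \subseteq \class{E}^{0}$ and $Y \in \class{E}^{n-1}_{f} \subseteq \class{E}^{n-1}$, then the same triangle witnesses $X \in \class{E}^{n}$. From this containment, if $\class{E}^{n}_{f}$ contains every compact object, then so does $\class{E}^{n}$, giving $\ghdim E \leq \Rdim E$.

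For the equality when $\ghdim E = 0$, suppose every compact object lies in $\class{E}^{0}$. I need to show that every compact $X$ in fact lies in $\class{E}^{0}_{f}$, i.e., is a retract of a \emph{finite} coproduct of suspensions of $E$. By hypothesis, $X$ is a retract of some $Y = \coprod_{i \in I} \Sigma^{n_{i}} E$, via maps $X \xrightarrow{s} Y \xrightarrow{r} X$ with $rs = \mathrm{id}_{X}$. Because $X$ is compact, the natural map
\[
\bigoplus_{i \in I} \cat{D}(E)(X, \Sigma^{n_{i}} E) \xrightarrow{} \cat{D}(E)\bigl(X, \coprod_{i \in I} \Sigma^{n_{i}} E\bigr)
\]
is an isomorphism, so $s$ is supported on a finite subset $J \subseteq I$; that is, $s$ factors as $X \xrightarrow{s'} Y' \xrightarrow{\iota} Y$ where $Y' = \coprod_{i \in J} \Sigma^{n_{i}} E$ and $\iota$ is the canonical inclusion. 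Then $(r\iota) s' = rs = \mathrm{id}_{X}$, exhibiting $X$ as a retract of the finite coproduct $Y' \in \class{E}^{0}_{f}$. Hence $\Rdim E = 0$.

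The main (very mild) obstacle is just recognizing that compactness is exactly what converts an arbitrary coproduct witness into a finite one; no further triangulated category machinery is needed, since the $n=0$ case of the Rouquier filtration already accommodates retracts.
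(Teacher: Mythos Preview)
Your proof is correct and follows exactly the same approach as the paper. The paper's proof is simply terser: it declares the inequality ``clear'' (your induction is the evident reason) and for the equality case says a compact retract of a coproduct of suspensions of $E$ is a retract of a finite such coproduct, which is precisely the compactness argument you spell out.
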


\begin{proof}
The inequality is clear.  If $\ghdim E=0$, then every compact object
is a retract of a coproduct of suspensions of $E$, so is also a
retract of a finite coproduct of suspensions of $E$.
\end{proof}

Note that $S$-algebras with $\ghdim E=0$ are called
\textbf{von Neumann regular}, because if $R$ is an ordinary ring,
$\ghdim R=0$ if and only if $R$ is von Neumann regular
(see~\cite{hovey-lockridge-semisimple}).  

The ghost dimension and the Rouquier dimension agree when $E_{*}$
is coherent, as we see in the following proposition.

\begin{proposition}\label{prop-coherent}
Suppose $E$ is an $S$-algebra or an ordinary ring for which $E_{*}$ is
coherent.  Then $X\in \class{E}^{n}_{f}$ if and only if $X\in
\class{E}^{n}$ and $X$ is compact. Thus $\ghdim E=\Rdim E$.  
\end{proposition}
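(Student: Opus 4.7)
The reverse inclusion $\class{E}^n_f \subseteq \class{E}^n \cap \{\text{compacts}\}$ is immediate from the definitions. For the forward direction, let $X$ be compact with $X \in \class{E}^n$, equivalently $\pdim X \leq n$ by Proposition~\ref{prop-proj-dim}; the plan is to build an explicit $Y_n \in \class{E}^n_f$ of which $X$ is a retract. The preliminary observation is that over a coherent $E_*$, every compact $Z \in \cat{D}(E)$ has $Z_*$ finitely presented: induct on the cell count of a finite cell presentation, noting that a single cell contributes $\Sigma^k E_*$ and attaching a cell sandwiches $Z_*$ between finitely presented pieces in a long exact sequence, while coherence keeps finite presentation closed under the resulting kernels and cokernels. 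In particular $X_*$ is finitely generated, so I may choose a finite coproduct $P_0 \in \class{E}^0_f$ and a map $P_0 \to X$ surjective on $\pi_*$; the fiber $X_1$ is again compact (thick subcategory of compacts) with $X_{1*}$ finitely presented. Iterating produces exact triangles
\[
X_{i+1} \to P_i \to X_i \xrightarrow{\delta_i} \Sigma X_{i+1}
\]
with $P_i \in \class{E}^0_f$, each $X_i$ compact with $X_{i*}$ finitely presented, and each $\delta_i$ a ghost (since $P_i \to X_i$ is surjective on $\pi_*$).

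I next assemble a finite approximation $Y_i$ to $X$ by induction on $i$, with the properties: (a) $Y_i \in \class{E}^i_f$; (b) there is a map $Y_i \to X$ with cofiber $\Sigma^{i+1} X_{i+1}$; and (c) the connecting map $\gamma_{i+1} \mathcolon X \to \Sigma^{i+1} X_{i+1}$ equals the iterated ghost composite $\Sigma^i \delta_i \circ \cdots \circ \delta_0$. Set $Y_0 = P_0$; (a), (b), (c) follow directly from the triangle $X_1 \to P_0 \to X$. For the inductive step, extract the map $\Sigma^{i-1} X_i \to Y_{i-1}$ from the rotated triangle $\Sigma^{i-1} X_i \to Y_{i-1} \to X$ and compose with $\Sigma^{i-1}(P_i \to X_i)$; applying the octahedral axiom to this composite produces $Y_i$ fitting in a triangle $\Sigma^{i-1} P_i \to Y_{i-1} \to Y_i$ (so $Y_i \in \class{E}^i_f$ by the definition of $\class{E}^i_f$, with $\Sigma^{i-1} P_i \in \class{E}^0_f$ and $Y_{i-1} \in \class{E}^{i-1}_f$), together with an extension $Y_i \to X$ of $Y_{i-1} \to X$ whose cofiber and connecting map are as in (b), (c).

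Applied at $i = n$, the connecting map $X \to \Sigma^{n+1} X_{n+1}$ is a composite of $n+1$ ghosts with domain $X$, hence zero by $\pdim X \leq n$ and Proposition~\ref{prop-proj-dim}. The triangle $Y_n \to X \to \Sigma^{n+1} X_{n+1}$ therefore splits, so the identity of $X$ lifts to a map $X \to Y_n$ and $X$ is a retract of $Y_n \in \class{E}^n_f$; thus $X \in \class{E}^n_f$. The equality $\ghdim E = \Rdim E$ is then immediate from the definitions. The main technical delicacy is the octahedral bookkeeping in the inductive step, which must verify property (c) — that the connecting map to $X$ at each stage really does track the iterated ghost composite; everything else in the argument is routine.
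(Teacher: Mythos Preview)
Your proof is correct and follows essentially the same strategy as the paper's: establish that compact objects have finitely presented homotopy over coherent $E_{*}$, inductively build triangles $X_{i+1}\to P_i\to X_i$ with $P_i$ finite free and ghost connecting maps, assemble the $Y_i$'s so that $Y_i\in\class{E}^{i}_{f}$ and the cofiber of $Y_i\to X$ is governed by the iterated ghost composite, and finally use $\pdim X\le n$ to split the triangle at stage $n$. The only cosmetic differences are that the paper phrases the finite-presentation step as a thick-subcategory argument rather than cell induction, and invokes the $3\times 3$ lemma where you invoke the octahedral axiom; these are interchangeable here.
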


There is no reason to think that $\ghdim E=\Rdim E$ if $E_{*}$ is not
coherent, even if $E$ is an ordinary ring $R$, but we do not know a
counterexample.

\begin{proof}
We first prove the well-known fact that, since $E_{*}$ is coherent,
for every compact object $X$ of $\cat{D} (E)$, $X_{*}$ is a finitely
presented $E_{*}$-module.  Consider the class $\cat{C}$ of $X$ for
which $X_{*}$ is a finitely presented $E_{*}$-module.  We claim that
$\cat{C}$ is a thick subcategory.  Given this, since $\cat{C}$
contains $E$, it contains all the compact objects as required (this is
well-known; see~\cite[Theorem~2.1.3]{hovey-axiomatic} for a general
approach to this fact).  To prove that $\cat{C}$ is thick, we must
show that it is closed under suspensions and retracts, which is
obvious in this case, and also if we have an exact triangle
\[
X \xrightarrow{f} Y \xrightarrow{g} Z \xrightarrow{h} \Sigma X
\]
in which $X,Y$ are in $\cat{C}$, then $Z\in \cat{C}$.  Given such an
exact triangle, $Z_{*}$ is an extension of $\coker f_{*}$ by $\ker
(\Sigma f_{*})$.  Finitely presented modules are always closed under
cokernels of morphisms and extensions~\cite[Lemma~4.54]{lam}.  If
$E_{*}$ is coherent, finitely presented modules are closed under
kernels of morphisms as well, and so $Z_{*}$ is finitely presented.
Indeed, if $g\mathcolon M\xrightarrow{}N$ is a morphism of finitely
presented modules over a coherent ring, then $M/\ker f\cong \im f$ is
a finitely generated submodule of the finitely presented module $N$,
so is finitely presented.  Hence $\ker f$ must be a finitely generated
submodule of the finitely presented module $M$, so is finitely
presented.

Now suppose $X\in \class{E}^{n}$ and $X$ is compact.  By induction,
because $X_{*}$ is finitely presented, we can choose finite coproducts
$P_{i}$ of suspensions of $E$ and exact triangles
\[
X_{i+1} \xrightarrow{} P_{i} \xrightarrow{f_{i}} X_{i}
\xrightarrow{\delta_{i}}\Sigma X_{i+1}
\]
where $X_{0}=X$ and $f_{i}$ is onto on homotopy, so $\delta_{i}$ is a
ghost.  Of course each $X_{i}$ is compact.  Then consider the exact
triangle
\[
X_{i+1}\xrightarrow{}Y_{i}\xrightarrow{}X
\xrightarrow{}\Sigma^{i+1}X_{i+1}.
\]
By using the $3\times 3$ lemma (well-known, but stated in
~\cite[Lemma~A.1.2]{hovey-axiomatic}) on the square 
\[
\begin{CD}
X @>>> \Sigma^{i}X_{i} \\
@| @VVV \\
X @>>> \Sigma^{i+1}X_{i+1}
\end{CD}
\]
we see that there is an exact triangle
\[
\Sigma^{i-1}P_{i}\xrightarrow{} Y_{i-1} \xrightarrow{}Y_{i} \xrightarrow{}
\Sigma^{i} P_{i}.
\]
Hence $Y_{i}$ is in $\class{E}^{i}_{f}$.  On the other hand, the map
$X\xrightarrow{}\Sigma^{n+1}X_{n+1}$ is the composite of $n+1$ ghosts,
so it is null since $X\in \class{E}^{n}$, using
Proposition~\ref{prop-proj-dim}.  Hence $X$ is a retract of $Y_{n}$,
so $X\in \class{E}^{n}_{f}$.
\end{proof}

\section{Flat dimension}\label{sec-flat}

We now offer a different approach to the weak dimension of an
$S$-algebra or an ordinary ring using flat modules.  As discussed in
the introduction, we did not use these in~\cite{hovey-lockridge-ghost}
because of the fundamental issue that homology functors are not always
representable in $\cat{D} (E)$ for an $S$-algebra $E$, or even a ring
$R$.

However, we can still define $\cat{F}$ to be the class of objects $F$
in $\cat{D} (E)$ such that $F_{*}$ is flat over $E_{*}$.  In this
case, we say that $F$ is \textbf{flat} (as an object of $\cat{D}
(E)$).  We can then define an $E$-module $X\in \cat{D} (E)$ to have
\textbf{constructible flat dimension $n$}, written $\cfdim X=n$, if
$X\in \class{\cat{F}}^{n}$.  Note that $\cfdim X\leq \pdim X$, since
every projective is flat.  We can then consider the maximal
constructible flat dimension of any object in $\cat{D} (R)$, or of
just a compact object in $\cat{D} (R)$.  Both of these are possible
candidates for something like weak dimension.  In principle, we could
also consider a definition similar to the Rouquier dimension, using
compact flat objects to resolve arbitrary compact objects, but we will
see that a compact flat object is projective, so this would just
recover Rouquier dimension.

\begin{proposition}\label{prop-flat-resolution}
We have $\cfdim X\leq \fdim X_{*}$ for all $X\in \cat{D} (E)$.  In
particular, the maximal constructible flat dimension of an object in
$\cat{D} (E)$ is bounded above by $\wdim E_{*}$.
\end{proposition}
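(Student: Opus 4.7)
The plan is to imitate the resolution-style construction from the proof of Proposition~\ref{prop-coherent}, but to iterate exactly $n := \fdim X_{*}$ times so that the terminal syzygy is automatically flat, rather than waiting for a long composite of ghosts to vanish.

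First, inductively choose $P_{0}, P_{1}, \dots, P_{n-1}$, each a coproduct of suspensions of $E$, together with exact triangles
\[
X_{i+1} \to P_{i} \to X_{i} \xrightarrow{\delta_{i}} \Sigma X_{i+1}
\]
(with $X_{0} = X$) so that each $P_{i} \to X_{i}$ is surjective on homotopy; unlike in Proposition~\ref{prop-coherent}, no finiteness is required on $P_{i}$. Each $\delta_{i}$ is then a ghost, so the homotopy long exact sequences splice into a projective resolution of $X_{*}$ over $E_{*}$ whose $n$-th syzygy is a shift of $X_{n,*}$. The classical flat-dimension shift lemma --- an $n$-th syzygy in a projective resolution of a module of flat dimension $\leq n$ is flat --- now gives $X_{n} \in \cat{F}$.

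Next, reproduce the auxiliary objects $Y_{i}$ from the proof of Proposition~\ref{prop-coherent}: set $Y_{0} = P_{0}$, and for $i \geq 1$ apply the $3\times 3$ axiom to the square whose rows are the composite ghosts $X \to \Sigma^{i} X_{i}$ and $X \to \Sigma^{i+1} X_{i+1}$. This produces exact triangles
\[
\Sigma^{i-1}P_{i} \to Y_{i-1} \to Y_{i} \to \Sigma^{i}P_{i}
\qquad\text{and}\qquad
\Sigma^{i}X_{i+1} \to Y_{i} \to X \to \Sigma^{i+1}X_{i+1}.
\]
Since each $P_{i}$ is projective, hence flat, the first family of triangles together with the base case $Y_{0}=P_{0}\in\class{\cat{F}}^{0}$ yields $Y_{i} \in \class{\cat{F}}^{i}$ by induction on $i$.

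Finally, read the second triangle at $i = n-1$:
\[
\Sigma^{n-1}X_{n} \to Y_{n-1} \to X \to \Sigma^{n}X_{n}.
\]
Here $\Sigma^{n-1}X_{n}\in\class{\cat{F}}^{0}$ because $X_{n}$ is flat, while $Y_{n-1}\in\class{\cat{F}}^{n-1}$ by the previous paragraph, so by the inductive definition of $\class{\cat{F}}^{n}$ we obtain $X \in \class{\cat{F}}^{n}$, i.e.\ $\cfdim X \leq n$. The second assertion then follows because $\fdim M \leq \wdim E_{*}$ for every $E_{*}$-module $M$. There is no real obstacle here; the argument is essentially the proof of Proposition~\ref{prop-coherent} stopped one step short, using flatness of the $n$-th syzygy in place of the nullity of an $(n+1)$-fold ghost composite. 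The one point that deserves explicit care is checking that the triangles $X_{i+1} \to P_{i} \to X_{i}$ really assemble on homotopy into a genuine projective resolution of $X_{*}$, which is why we need each $\delta_{i}$ to be a ghost and each $P_{i}$ to be projective rather than merely flat.
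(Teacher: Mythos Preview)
Your argument is correct and is essentially the paper's own proof, up to a harmless index shift (your $Y_{i}$ is the paper's $Y_{i+1}$) and a reversal in the order of construction: you build the triangles $X_{i+1}\to P_{i}\to X_{i}$ in $\cat{D}(E)$ first and then read off the projective resolution of $X_{*}$, whereas the paper starts from an algebraic resolution $0\to F\to P_{n-1}\to\dotsb\to P_{0}\to X_{*}\to 0$ and then realizes each short exact sequence by a triangle. The $3\times 3$ step, the conclusion $Y_{i}\in\class{\cat{F}}^{i}$, and the final triangle $\Sigma^{n-1}X_{n}\to Y_{n-1}\to X$ with $X_{n}$ flat are identical in both versions.
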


\begin{proof}
There is nothing to prove if $\fdim X_{*}$ is infinite, so suppose
$\fdim X_{*}=n$.  Then by beginning a projective resolution of
$X_{*}$, we get an exact sequence of $E_{*}$-modules
\[
0 \xrightarrow{} F \xrightarrow{} P_{n-1} \xrightarrow{d_{n-1}} \dotsb
\xrightarrow{d_{1}} P_{0} \xrightarrow{d_{0}} X_{*} \xrightarrow{}0
\]
where each $P_{i}$ is projective over $E_{*}$ and $F$ is flat over $E_{*}$.
This gives us short exact sequences
\[
0 \xrightarrow{} K_{i+1} \xrightarrow{} P_{i} \xrightarrow{}
K_{i}\xrightarrow{}0
\]
for $i\leq n-1$, where $K_{i}=\ker d_{i-1}$, $K_{0}=X_{*}$, and
$K_{n}=F$.  Because the $P_{i}$ are projective, these short exact
sequences are uniquely realizable by exact triangles in $\cat{D} (E)$
\[
X_{i+1} \xrightarrow{} Q_{i}\xrightarrow{}X_{i} \xrightarrow{}\Sigma
X_{i+1}
\]
where $X_{0}=X$, $(X_{i})_{*}=K_{i}$ and $(Q_{i})_{*}=P_{i}$.  In more
detail, $P_{i}$ is a retract of a direct sum of copies of $E_{*}$.
Thus we can let $Q_{i}$ be the corresponding retract of a coproduct of
copies of $E$.  Then one checks that a map out of $Q_{i}$ to any
object $Y$ is equivalent to a map $P_{i}\xrightarrow{}Y_{*}$.  This
gives us exact triangles of the form 
\[
\Sigma^{i-1}X_{i} \xrightarrow{}Y_{i}\xrightarrow{}X \xrightarrow{}
\Sigma^{i}X_{i}
\]
for all $i$, where the last map is the composite of the maps
$\Sigma^{j}X_{j}\xrightarrow{}\Sigma^{j+1}X_{j+1}$.  Using the
$3\times 3$ lemma on the commutative square 
\[
\begin{CD}
X @>>> \Sigma^{i}X_{i} \\
@| @VVV \\
X @>>> \Sigma^{i+1}X_{i+1}
\end{CD}
\]
gives us exact triangles 
\[
\Sigma^{i-1}Q_{i} \xrightarrow{} Y_{i} \xrightarrow{} Y_{i+1}
\xrightarrow{}\Sigma^{i} Q_{i}
\]
for all $i$.  In particular, $\cfdim Y_{i}\leq i-1$.  Now the exact triangle
\[
\Sigma^{n-1}X_{n} \xrightarrow{} Y_{n} \xrightarrow{}X \xrightarrow{}
\Sigma^{n}X_{n}
\]  
shows that $\cfdim X\leq n$, since $(X_{n})_{*}$ is flat.
\end{proof}

We now give an alternative characterization of the flat objects in
$\cat{D} (E)$.  Recall that a \textbf{phantom} map in $\cat{D} (E)$ is
a map $f\mathcolon X\xrightarrow{}Y$ such that $fg=0$ for all
$g\mathcolon A\xrightarrow{}X$ where $A$ is compact.  We need the
following lemma. 

\begin{lemma}\label{lem-phantom}
Suppose $E$ is an $S$-algebra.  A map $f\mathcolon X\xrightarrow{}Y$
is phantom in $\cat{D} (E)$ if and only if $\pi_{*} (f\wedge_{E}Z)=0$
for all left $E$-modules $Z$ if and only if $\pi_{*} (f\wedge_{E}Z)=0$
for all compact left $E$-modules $Z$.  
\end{lemma}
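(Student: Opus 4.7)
The plan is to reduce all three conditions to statements about smashing with compact left $E$-modules, exploiting Spanier--Whitehead duality on one side and the fact that every $E$-module is a filtered homotopy colimit of compact ones on the other.

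First, I would establish the link between the phantom condition and smashing with compact left modules. Recall that for any compact right $E$-module $A$ one has the functional dual $DA = F_{E}(A,E)$, which is a compact left $E$-module, and the natural equivalence $F_{E}(A,X) \simeq X \wedge_{E} DA$ for every right $E$-module $X$ (this is the standard duality isomorphism for finite cell $E$-modules, proved by induction on cells starting from $A=E$, where both sides reduce to $X$). Applying $\pi_{*}$ yields the natural identification
\[
\cat{D}(E)(A,f)_{*} \;=\; \pi_{*}F_{E}(A,f) \;=\; \pi_{*}(f \wedge_{E} DA).
\]
Since $A \mapsto DA$ is an equivalence between the categories of compact right and compact left $E$-modules (with quasi-inverse given by dualizing again), varying $A$ over compact right $E$-modules is the same as varying $Z = DA$ over compact left $E$-modules. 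Hence $f$ is phantom if and only if $\pi_{*}(f \wedge_{E} Z) = 0$ for every compact left $E$-module $Z$. This gives the equivalence of the first and third conditions.

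Next I would show that the third condition implies the second (the converse being trivial). For this, the key facts are that every left $E$-module $Z$ is equivalent to a cell $E$-module, and therefore is the filtered (sequential) homotopy colimit of its finite subcomplexes $Z_{\alpha}$, each of which is compact; that the functor $f \wedge_{E} (-)$ commutes with homotopy colimits (being a left Quillen functor, or by the explicit construction of $\wedge_{E}$ via the two-sided bar construction); and that $\pi_{*}$ preserves filtered homotopy colimits of spectra. Combining these,
\[
\pi_{*}(f \wedge_{E} Z) \;\cong\; \operatorname*{colim}_{\alpha} \pi_{*}(f \wedge_{E} Z_{\alpha}),
\]
so the vanishing on compact $Z_{\alpha}$ forces vanishing on arbitrary $Z$.

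The one step requiring care is the duality isomorphism $F_{E}(A,X) \simeq X \wedge_{E} DA$ and in particular its naturality in $X$; this is the technical input that translates the phantom condition on the mapping spectrum into a statement about the smash product. Once that is in place, the rest of the argument is the standard ``test on compacts, then pass to a filtered colimit'' routine.
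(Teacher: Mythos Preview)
Your proof is correct and, for the Spanier--Whitehead duality step linking the phantom condition to vanishing of $\pi_{*}(f\wedge_{E}Z)$ for compact $Z$, essentially identical to the paper's argument (just more explicit about the equivalence $F_{E}(A,X)\simeq X\wedge_{E}DA$).

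For the passage from compact $Z$ to arbitrary $Z$ there is a mild difference worth noting. The paper applies the general principle that phantom maps vanish under every homology theory (citing Christensen--Strickland), using the homology theory $\pi_{*}(-\wedge_{E}Z)$ on \emph{right} $E$-modules; in effect this approximates $X$ by compact objects. You instead approximate $Z$ by its finite cell subcomplexes on the \emph{left}-module side and commute $\pi_{*}$ and $-\wedge_{E}-$ through the filtered colimit. These are dual realizations of the same idea; your version has the virtue of being self-contained, while the paper's appeals to a ready-made lemma. One small wording point: the colimit over finite cell subcomplexes is filtered but need not be sequential (a skeleton can have infinitely many cells), so you should drop the parenthetical ``(sequential)''; the argument only needs that $\pi_{*}$ commutes with filtered homotopy colimits, which it does.
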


\begin{proof}
Spanier-Whitehead duality implies that $\pi_{*} (f\wedge_{E}Z)=0$ for
all compact left $E$-modules $Z$ if and only if $\cat{D} (E) (W,f)=0$
for all compact right $E$-modules $W$, which of course is the
definition of $f$ being phantom.  It remains to show that, under this
condition, $\pi_{*} (f\wedge_{E}Z)=0$ for all left $E$-modules $Z$.
But $\pi_{*} (-\wedge_{E}Z)$ is a homology theory on $\cat{D} (E)$,
and phantom maps vanish on all homology
theories~\cite[Proposition~1.1]{christensen-strickland}.  The reader
should note that Christensen and Strickland are working in the
ordinary category of spectra, where Spanier-Whitehead duality is
internal, but the same proof will work for a general $S$-algebra $E$
as long as we remember that Spanier-Whitehead duality shifts from left
to right $E$-modules and vice versa.
\end{proof}

\begin{proposition}\label{prop-Brown}
Suppose $E$ is an $S$-algebra or a ring, and $X\in \cat{D} (E)$.  Then
the following are equivalent\uc
\begin{enumerate}
\item $X_{*}$ is a flat $E_{*}$-module. 
\item Every ghost with domain $X$ is phantom.  
\item There is an exact triangle 
\[
P \xrightarrow{} X \xrightarrow{g} Y \xrightarrow{} \Sigma P
\]
where $P$ is projective and $g$ is phantom.  
\item Every map $A\xrightarrow{}X$, where $A$ is compact, factors
through a compact projective object.  
\item The natural map 
\[
X_{*} \otimes_{E_{*}} Z_{*} \xrightarrow{} \pi_{*} (X\wedge_{E}Z)
\]
is an isomorphism for all left $E$-modules $Z$.  
\end{enumerate}
\end{proposition}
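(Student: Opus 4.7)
The plan is to prove a cycle of five implications $(1)\Rightarrow(5)\Rightarrow(2)\Rightarrow(3)\Rightarrow(4)\Rightarrow(1)$. The implication $(1)\Rightarrow(5)$ is immediate from the universal coefficient spectral sequence $\Tor^{E_{*}}_{s,t}(X_{*},Z_{*})\Rightarrow\pi_{s+t}(X\wedge_{E}Z)$ recalled after Proposition~\ref{prop-proj-dim}: if $X_{*}$ is flat, all higher $\Tor$-terms vanish and only the edge isomorphism survives.

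For $(5)\Rightarrow(2)$, let $f\mathcolon X\to Y$ be a ghost, so $f_{*}=0$. Naturality in $W$ of the pairing $\mu_{W}\mathcolon W_{*}\otimes_{E_{*}}Z_{*}\to\pi_{*}(W\wedge_{E}Z)$ gives a commutative square whose top arrow is $f_{*}\otimes 1 = 0$ and whose left vertical is the isomorphism $\mu_{X}$ supplied by $(5)$; hence $\pi_{*}(f\wedge_{E}Z)=0$ for every left $E$-module $Z$, and Lemma~\ref{lem-phantom} promotes $f$ to a phantom. For $(2)\Rightarrow(3)$, choose a coproduct $P$ of suspensions of $E$ with a map $P\to X$ hitting a set of $E_{*}$-module generators of $X_{*}$, and complete to a triangle $P\to X\xrightarrow{g}Y\to\Sigma P$; the long exact sequence on $\pi_{*}$ forces $g$ to be a ghost, which $(2)$ then upgrades to phantom. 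For $(3)\Rightarrow(4)$, given compact $A$ and $h\mathcolon A\to X$, the composite $gh$ vanishes since $g$ is phantom and $A$ is compact, so $h$ lifts to a map $A\to P$; because $A$ is compact, this lift factors through a finite sub-coproduct $Q$ of $P$, yielding the desired factorization $A\to Q\to X$ through a compact projective.

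The main obstacle is $(4)\Rightarrow(1)$, for which I would use the Lazard--Govorov criterion: $X_{*}$ is flat if and only if every map from a finitely presented $E_{*}$-module into $X_{*}$ factors through a finitely generated projective. Given $\sigma\mathcolon M\to X_{*}$ with presentation $F_{1}\xrightarrow{\phi}F_{0}\to M\to 0$, lift $\sigma$ to an $E_{*}$-module map $\tilde\sigma\mathcolon F_{0}\to X_{*}$ and realize $F_{0},F_{1},\phi,\tilde\sigma$ in $\cat{D}(E)$ by compact projectives $\tilde F_{0},\tilde F_{1}$ together with a morphism $\tilde\phi$ and a map $\alpha\mathcolon\tilde F_{0}\to X$; all such lifts exist and are determined by their effect on $\pi_{*}$ because projectives admit no nonzero ghosts out of them. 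The composite $\alpha\circ\tilde\phi$ vanishes on $\pi_{*}$ (since $\tilde\sigma\circ\phi=0$) and therefore vanishes in $\cat{D}(E)$, so $\alpha$ factors through the cofiber $C$ of $\tilde\phi$, a compact object. Applying $(4)$ to the resulting $C\to X$ yields a factorization $C\to Q\to X$ through a compact projective $Q$, and passing to $\pi_{*}$ writes $\sigma$ as $M\to\pi_{*}Q\to X_{*}$ with $\pi_{*}Q$ a finitely generated projective $E_{*}$-module. The delicate point is the graded bookkeeping: one must check that the portion of $\pi_{*}C$ hit by $\pi_{*}\tilde F_{0}$ really realizes $M$, rather than an extension of $M$ by a shifted copy of $\ker\phi$. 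This is routine for ordinary rings and, in general, reduces to arranging the realizations of $F_{0},F_{1}$ as sums of suspensions of $E$ in the correct degrees.
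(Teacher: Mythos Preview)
Your proof is correct and traces the same cycle of implications as the paper; the arguments for $(1)\Rightarrow(5)\Rightarrow(2)\Rightarrow(3)\Rightarrow(4)$ match the paper's almost verbatim. One small remark on $(3)\Rightarrow(4)$: the hypothesis of $(3)$ only says $P$ is projective, not that it is a coproduct of suspensions of $E$, so strictly speaking you should insert the step that any projective is a retract of a free $E$-module, and then factor $A\to P\to(\text{free})$ through a finite sub-coproduct. The paper makes this explicit.

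The one genuine difference is $(4)\Rightarrow(1)$. The paper argues categorically: condition~$(4)$ makes the subcategory $\Lambda'(X)$ of maps from compact projectives cofinal in the filtered category $\Lambda(X)$ of all maps from compacts into $X$, whence $X_{*}=\colim_{\Lambda'(X)}(P_{\alpha})_{*}$ is a filtered colimit of finitely generated projectives and therefore flat. You instead verify the Lazard--Govorov criterion directly, realizing a finite presentation $F_{1}\to F_{0}\to M$ inside $\cat{D}(E)$ and applying $(4)$ to the cofiber $C$. Both routes rest on Lazard's theorem; the paper's is slicker and avoids building $C$, while yours is more hands-on and stays closer to module theory.

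Your ``delicate point'' about graded bookkeeping is in fact a non-issue: you never need to identify $\pi_{*}C$. Since $p\circ\tilde\phi=0$ in $\cat{D}(E)$, the composite $F_{0}\xrightarrow{p_{*}}\pi_{*}C\to\pi_{*}Q$ kills $\im\phi$ and so descends to a map $M\to\pi_{*}Q$; composing with $\pi_{*}Q\to X_{*}$ and pulling back along the surjection $F_{0}\twoheadrightarrow M$ recovers $\tilde\sigma$, hence the composite $M\to\pi_{*}Q\to X_{*}$ equals $\sigma$. No structural analysis of $\pi_{*}C$ is required.
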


If $E$ is an ordinary ring $R$, then $X\wedge_{E}Z$ would be the total
left derived tensor product of the chain complex $X$ of right
$R$-modules and the chain complex $Z$ of left $R$-modules.  

\begin{proof}
For any $X$, there is an exact triangle 
\[
P \xrightarrow{} X \xrightarrow{g} Y \xrightarrow{} \Sigma P
\]
in which $P$ is projective and $g$ is a ghost.  Indeed, we simply take
an epimorphism from a free $E_{*}$-module $P_{*}$ to $X_{*}$.  We
then let $P$ be the corresponding coproduct of suspensions of $E$,
which is projective, and realize the map $P_{*}\xrightarrow{}X_{*}$ as
a map $P\xrightarrow{}X$.  The cofiber $g$ is then automatically a
ghost, and every other ghost with domain $X$ factors through $g$.  

It follows from this that part~(2) and~(3) are equivalent.  This also
means that part~(3) implies part~(4), since part~(3) means that a map
$A\xrightarrow{}X$, where $A$ is compact, factors through a
projective, and therefore a free $E$-module.  Since $A$ is compact, it
must factor through a finite coproduct of suspensions of $E$.

We now show that part~(4) implies part~(1).  Recall the filtered
category $\Lambda (X)$ from~\cite[Section~2.3]{hovey-axiomatic} of
maps from a compact object into $X$, and consider the full subcategory
$\Lambda ' (X)$ of maps from a compact projective into $X$.  Given
part~(4), $\Lambda ' (X)$ is cofinal in $\Lambda (X)$ and itself
filtered.  Thus, for any homology theory $H$, $H (X)=\colim_{\Lambda '
(X)}H (P_{\alpha})$ by~\cite[Corollary~2.3.11]{hovey-axiomatic}.  In
particular, $X_{*}$ is a colimit of finitely generated projective
modules, so is flat.

To see that part~(1) implies part~(5), use the universal coefficient
spectral sequence 
\[
\Tor^{E_{*}}_{s,t} (X_{*},Z_{*}) \Rightarrow \pi_{t-s} (X\wedge_{E}Z)
\]
of~\cite[Theorem~IV.4.1]{elmendorf-kriz-mandell-may}.  

To see that part~(5) implies part~(2), suppose that $g$ is a ghost
with domain $X$.  Part~(5) then implies that $\pi_{*} (g\wedge_{E}Z)=0$
for all left $E$-modules $Z$, which implies that $g$ is phantom by
Lemma~\ref{lem-phantom}.
\end{proof}

Recall that, for a general ring $R$, there are finitely generated flat
modules which are not projective, though of course every finitely
presented flat module is projective.  The following corollary is an
analog of this fact for $S$-algebras.  

\begin{corollary}\label{cor-Brown}
Suppose $E$ is an $S$-algebra or an ordinary ring.  If $X$ is a
compact flat object of $\cat{D} (E)$, then $X$ is projective.  
\end{corollary}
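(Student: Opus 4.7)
The plan is to deduce the corollary directly from part~(4) of Proposition~\ref{prop-Brown}, which characterizes flat objects by a factorization property for maps out of compact objects into them.

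Concretely, assume $X$ is both compact and flat. Since $X$ is compact, the identity map $\mathrm{id}_{X}\mathcolon X\xrightarrow{}X$ is itself a map from a compact object into $X$. By Proposition~\ref{prop-Brown}(4), this map factors through some compact projective object $P$; that is, we obtain maps
\[
X \xrightarrow{i} P \xrightarrow{r} X
\]
with $ri=\mathrm{id}_{X}$. Hence $X$ is a retract of the compact projective $P$.

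Now I would close the argument by observing that the class of projectives is closed under retracts. By definition, $P$ being projective (i.e.\ $\pdim P=0$) means $P\in \class{E}^{0}$, the class of retracts of coproducts of suspensions of $E$; so $P$ is itself a retract of some such coproduct $C$. Since a retract of a retract of $C$ is again a retract of $C$, we conclude $X\in \class{E}^{0}$, i.e.\ $X$ is projective. This completes the proof.

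There is no serious obstacle here: the content is already packaged in the equivalence $(1)\Leftrightarrow (4)$ of Proposition~\ref{prop-Brown}, which handled all of the $S$-algebra subtleties (Brown representability failure, the appeal to the filtered category $\Lambda'(X)$, etc.). The only care required is to note that compactness of $X$ lets us apply part~(4) to $\mathrm{id}_{X}$, and that $\class{E}^{0}$ is manifestly closed under retracts so that ``projective'' passes from $P$ to the summand $X$.
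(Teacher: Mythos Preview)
Your argument is correct. It differs from the paper's only in which clause of Proposition~\ref{prop-Brown} is invoked: the paper uses part~(2), observing that the universal ghost out of $X$ is phantom, hence (since $X$ is compact) null, so $\pdim X=0$; you use part~(4), applying the factorization property to $\mathrm{id}_{X}$ to exhibit $X$ as a retract of a compact projective. Both arguments exploit compactness of $X$ in the same way---by feeding $\mathrm{id}_{X}$ into a statement quantified over maps from compacts---and both conclude in one step. Your route has the minor virtue of immediately displaying $X$ as a retract of a \emph{compact} projective, while the paper's route connects more directly to the ghost/projective-dimension language of Proposition~\ref{prop-proj-dim}; neither offers a real advantage over the other.
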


\begin{proof}
The universal ghost out of $X$ is phantom, and hence null.  Thus $X$
is projective, necessarily finitely generated since $X$ is compact.  
\end{proof}

We have been unable to fully generalize Proposition~\ref{prop-Brown} to
objects $X$ with $\cfdim X=n$.  We therefore make the following
definition.  

\begin{definition}\label{defn-flat-dim-again}
Suppose $E$ is an $S$-algebra or an ordinary ring, and $X\in \cat{D}
(E)$.  We say that $X$ has \textbf{flat dimension} at most $n$,
written $\fdim X\leq n$, if every composite of $n+1$ ghosts with
domain $X$ is phantom.
\end{definition}

We then have the following theorem.  

\begin{theorem}\label{thm-flat}
Suppose $E$ is an $S$-algebra or an ordinary ring, and $X\in \cat{D}
(E)$.  Then $\fdim X\leq \cfdim X$, and the following are equivalent.  
\begin{enumerate}
\item $\fdim X\leq n$.  
\item There is an exact triangle 
\[
B \xrightarrow{} X \xrightarrow{g} Y \xrightarrow{} \Sigma B
\]
where $\pdim B\leq n$ and $g$ is phantom.  
\item Every map $A\xrightarrow{}X$, where $A$ is compact, factors
through a compact object $B$ with $\pdim B\leq n$.  
\item For any left $E$-module $Z$, in the universal coefficient
spectral sequence
\[
E^{2}_{s,t} = \Tor^{E_{*}}_{s,t} (X_{*},Z_{*}) \Rightarrow \pi_{t-s}
(X\wedge_{E}Z),
\]
we have $E^{\infty}_{s,*}=0$ for all $s>n$.  
\end{enumerate}
\end{theorem}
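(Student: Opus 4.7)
The plan is to establish the four-way equivalence via the cycle $(1)\Rightarrow(2)\Rightarrow(3)\Rightarrow(1)$ together with $(1)\Leftrightarrow(4)$, and then deduce $\fdim X\leq \cfdim X$ using characterization~(2).

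I would begin with the cycle. For $(3)\Rightarrow(1)$: given a composite $h$ of $n+1$ ghosts with domain $X$ and a compact $A\to X$, factor the latter through a compact $B$ with $\pdim B\leq n$; then $h\circ(A\to B)$ is a composite of $n+1$ ghosts out of $B$, which is null by Proposition~\ref{prop-proj-dim}, so $h$ vanishes on compacts, i.e., is phantom. For $(1)\Rightarrow(2)$, I would induct on $n$, with base case $n=0$ given by Proposition~\ref{prop-Brown}. Inductively, choose a projective $P_{0}$ with a map $P_{0}\to X$ surjective on homotopy, and let $g\colon X\to X_{1}$ be its cofiber, a ghost. Any composite of $n$ ghosts out of $X_{1}$ precomposed with $g$ is a composite of $n+1$ ghosts out of $X$, hence phantom, so $\fdim X_{1}\leq n-1$. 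The inductive hypothesis yields a triangle $B'\to X_{1}\to Y\to \Sigma B'$ with $\pdim B'\leq n-1$ and $X_{1}\to Y$ phantom; applying the octahedral axiom to $X\xrightarrow{g}X_{1}\to Y$ produces a triangle $B\to X\to Y\to \Sigma B$ in which $B$ sits in $P_{0}\to B\to B'\to \Sigma P_{0}$ (so $\pdim B\leq n$), and the composite $X\to Y$ is a ghost followed by a phantom, hence phantom. For $(2)\Rightarrow(3)$, given compact $A\to X$, the composite $A\to X\to Y$ is a phantom restricted to a compact, hence zero, so $A\to X$ factors through $B$; because $B\in \class{E}^{n}$, an induction on $n$ that refines each coproduct of suspensions of $E$ in a presentation of $B$ to a finite sub-coproduct carrying the image of $A$ produces a compact sub-object $B_{0}\in \class{E}^{n}_{f}$ through which $A\to B$ factors.

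For $(1)\Leftrightarrow(4)$ I would use the universal coefficient spectral sequence together with Lemma~\ref{lem-phantom}. Any ghost $\alpha\colon X\to X'$ is zero on $X_{*}$ and hence on every $\Tor^{E_{*}}_{s,t}(X_{*},Z_{*})$, so it induces the zero map on $E^{\infty}_{s,*}$ and therefore carries the filtration piece $F_{s}\pi_{*}(X\wedge_{E}Z)$ into $F_{s-1}\pi_{*}(X'\wedge_{E}Z)$. A composite of $n+1$ ghosts thus shifts filtration by $n+1$. Assuming~(1), the iterated universal ghost construction $X=X_{0}\to X_{1}\to \dotsb$ realizes $X\to X_{n+1}$ as such a composite, which is phantom by~(1) and hence induces the zero map on $\pi_{*}(-\wedge_{E}Z)$ for every $Z$ by Lemma~\ref{lem-phantom}; since in this construction the kernel of $\pi_{*}(X\wedge_{E}Z)\to \pi_{*}(X_{n+1}\wedge_{E}Z)$ is exactly $F_{n}$, we conclude $F_{n}=\pi_{*}(X\wedge_{E}Z)$, i.e., $E^{\infty}_{s,*}=0$ for $s>n$. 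Conversely, given~(4), any composite of $n+1$ ghosts out of $X$ carries $F_{n}=\pi_{*}(X\wedge_{E}Z)$ into $F_{-1}=0$, so vanishes on $\pi_{*}(-\wedge_{E}Z)$ for every $Z$ and is phantom by Lemma~\ref{lem-phantom}.

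Finally, for $\fdim X\leq \cfdim X$ I would verify~(2) by induction on $n=\cfdim X$. The base case $n=0$ is Proposition~\ref{prop-Brown}. In the inductive step we are given a triangle $F\to Z\to X\to \Sigma F$ with $F$ flat and $\cfdim Z\leq n-1$; Proposition~\ref{prop-Brown} gives a triangle $P_{0}\to F\to Y_{0}\to \Sigma P_{0}$ with $P_{0}$ projective and $F\to Y_{0}$ phantom, and the inductive hypothesis gives $B_{1}\to Z\to Y_{1}\to \Sigma B_{1}$ with $\pdim B_{1}\leq n-1$ and $Z\to Y_{1}$ phantom. A projective is a coproduct of suspensions of $E$ and hence a coproduct of compacts, so any phantom precomposed with a projective is null; thus $P_{0}\to F\to Z\to Y_{1}$ is zero and $P_{0}\to F\to Z$ factors through $B_{1}$ via a map $\phi\colon P_{0}\to B_{1}$. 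Let $B$ be the cofiber of $\phi$; its defining triangle $P_{0}\to B_{1}\to B\to \Sigma P_{0}$ gives $\pdim B\leq n$, and since $F\to Z\to X$ is the zero composite, $P_{0}\to B_{1}\to Z\to X$ is zero, so the map $B_{1}\to Z\to X$ descends to a map $\psi\colon B\to X$. I expect the main obstacle to be verifying that the cofiber $Y$ of $\psi$ receives a phantom map from $X$; the plan is to identify $Y$ via iterated octahedral axioms applied to $B_{1}\to Z\to X$ and $P_{0}\to B_{1}\to X$, and then to exploit the separate phantomness of $F\to Y_{0}$ and $Z\to Y_{1}$ to check that every compact $A\to X$ becomes null in $Y$.
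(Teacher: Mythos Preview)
Your argument for $(1)\Rightarrow(2)$ contains a genuine error.  You assert that if $g\colon X\to X_{1}$ is the cofiber of a projective cover and $\fdim X\leq n$, then $\fdim X_{1}\leq n-1$, justified by the observation that $h\circ g$ is phantom for every composite $h$ of $n$ ghosts out of $X_{1}$.  But $h\circ g$ phantom does not imply $h$ phantom: a map $A\to X_{1}$ from a compact object need not lift to $X$, so the vanishing of $h\circ g$ on compacts says nothing about $h$ on such an $A$.  In fact the claim itself can fail.  Using the characterization in part~(4), $\fdim X\leq n$ says only that the map $\pi_{*}(X\wedge_{E}Z)\to\pi_{*}(\Sigma^{n+1}X_{n+1}\wedge_{E}Z)$ vanishes, whereas $\fdim X_{1}\leq n-1$ would require $\pi_{*}(\Sigma X_{1}\wedge_{E}Z)\to\pi_{*}(\Sigma^{n+1}X_{n+1}\wedge_{E}Z)$ to vanish; since $\pi_{*}(X\wedge_{E}Z)\to\pi_{*}(\Sigma X_{1}\wedge_{E}Z)$ is typically not surjective, the first does not force the second.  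Concretely, for $n=1$ your claim would force $(X_{1})_{*}$ to be flat, i.e.\ $\Tor^{E_{*}}_{s}(X_{*},-)=0$ for $s\geq 2$, but condition~(4) only kills $E^{\infty}_{s,*}$ for $s>1$, not $E^{2}_{s,*}$.

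The paper avoids this by not inducting on $\fdim$.  Instead it iterates your construction $n+1$ times to obtain the tower $X=X_{0}\to X_{1}\to\dotsb\to X_{n+1}$, invokes Christensen's result (or, equivalently, repeated use of the $3\times 3$ lemma on the successive triangles) to see that the fiber $B$ of the composite $X\to\Sigma^{n+1}X_{n+1}$ satisfies $\pdim B\leq n$, and then observes that this composite is itself a composition of $n+1$ ghosts and hence phantom by~(1).  This produces the triangle in~(2) directly, and simultaneously exhibits $g$ as the \emph{universal} composite of $n+1$ ghosts out of $X$, which is what makes $(1)\Leftrightarrow(2)$ immediate and also drives the equivalence with~(4).

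Two smaller points.  Your sketch of $(2)\Rightarrow(3)$ (``refine each coproduct to a finite sub-coproduct'') hides a real subtlety: when one passes to a weak pushout to build the compact $B'$, the resulting map $B'\to X$ need not agree with the original map from $A$, and a correction term through an auxiliary compact projective is required; the paper carries this out carefully.  Finally, your plan for $\fdim X\leq\cfdim X$ via~(2) is both incomplete (as you note) and unnecessarily indirect; the paper instead verifies~(1) directly by an induction on $\cfdim X$ that mirrors the compact-factoring trick, first reducing to the case where $X$ itself (not a retract) sits in the flat triangle, then using Proposition~\ref{prop-Brown} to factor the relevant compact through a finitely generated projective and absorb one ghost.
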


It would be good to find an example where $\fdim X<\cfdim X$, or to
prove they are always equal.  

\begin{proof}
We show that $\cfdim X\leq n$ implies every composition of $n+1$ ghosts
out of $X$ is phantom by induction on $n$.  The base case of $n=0$ is
Proposition~\ref{prop-Brown}.  For the induction step, suppose $\cfdim
X\leq n$, 
\[
X \xrightarrow{g_{1}} Z_{1} \xrightarrow{g_{2}} \dotsb
\xrightarrow{g_{n+1}} Z_{n+1}
\]
is the composition $g$ of $n+1$ ghosts, and $f\mathcolon
A\xrightarrow{}X$ is a map from a compact object.  We must show
$gf=0$.  Since $\cfdim X\leq n$, there is a cofiber sequence 
\[
F \xrightarrow{}Y \xrightarrow{s}\widetilde{X} \xrightarrow{h} \Sigma F
\]
where $F$ is flat, $\cfdim Y\leq n-1$, and there are maps 
\[
X\xrightarrow{i} \widetilde{X}\xrightarrow{r} X
\]
with $ri=1_{X}$.  Since $gf= (gr) (if)$, and $gr$ is again a
composition of $n+1$ ghosts, we can assume $X=\widetilde{X}$.  

The composition $hf$ factors through a finitely generated projective
$P$, by Proposition~\ref{prop-Brown}.  This gives us a commutative
diagram 
\[
\begin{CD}
\Sigma^{-1}P @>>> B @>t>> A @>u>> P \\
@VVV @Vf'VV @VfVV @VVV \\
F @>>> Y @>>s> X @>>h> \Sigma F
\end{CD}
\]
whose rows are exact triangles.  Note that $B$ is necessarily a
compact object, and so the composition $g_{n}\circ \dotsb \circ
g_{1}sf'$ is null, since $\fdim Y\leq n-1$.  Hence we have 
\[
g_{n}\circ \dotsb \circ g_{1}ft=0 \text{ so } g_{n}\circ \dotsb \circ
g_{1}f=vu
\]
for some map $v\mathcolon P\xrightarrow{}Z_{n}$.  But then
$g_{n+1}v=0$, since $P$ is projective, and so 
\[
g_{n+1}\circ \dotsb \circ g_{1}f=0
\]
as required.  This completes the proof that $\fdim X\leq \cfdim X$.  

The work of Christensen~\cite[Theorem~3.5]{christensen} implies that,
for any $X$, there is an exact triangle
\[
B \xrightarrow{} X \xrightarrow{g} Y \xrightarrow{} \Sigma B
\]
with $\pdim B\leq n$ and $g$ is a composite of $n+1$ ghosts.  But then
every composite of $n+1$ ghosts with domain $X$ factors through $g$.
Thus every composite of $n+1$ ghosts is phantom if and only if $g$ is
phantom, so part~(1) and part~(2) are equivalent.  

Now, the universal coefficient spectral sequence for $\pi_{*}
(X\wedge_{E}Z)$ is constructed as follows.  Beginning with $X_{0}=X$,
we construct exact triangles
\[
X_{i+1} \xrightarrow{} Q_{i}\xrightarrow{h_{i}}X_{i} \xrightarrow{k_{i}}\Sigma
X_{i+1}
\]
as in the proof of Proposition~\ref{prop-flat-resolution}, in which
$Q_{i}$ is projective, $h_{i}$ is onto on homotopy, and $k_{i}$ is a
ghost.  We then smash them with $Z$ and take homotopy to get our
spectral sequence of homological type.  An element in $\pi_{*}
(X\wedge_{E}Z)$ is detected in $E^{\infty}_{s,*}$ if and only if it is
in the kernel of
\[
\pi_{*} (X\wedge_{E}Z)\xrightarrow{}\pi_{*} (\Sigma^{j}X_{j}\wedge_{E}Z)
\]
for $j=s$ but not for $j=s-1$.  So we must determine when the map 
\[
\pi_{*} (X\wedge_{E}Z)\xrightarrow{}\pi_{*} (\Sigma^{n}X_{n}\wedge_{E}Z)
\]
is zero for all $Z$.  However, comparison of this construction
of~\cite[Section~IV.5]{elmendorf-kriz-mandell-may}
with~\cite[Theorem~3.5]{christensen} shows that in fact the map
$X\xrightarrow{}\Sigma^{n}X_{n}$ is the same as the universal
composite of $n+1$ ghosts out of $X$, $g\mathcolon
X\xrightarrow{}Y$ of the previous paragraph.  Lemma~\ref{lem-phantom}
then shows part~(2) and part~(4) are equivalent.

It is clear that if every map from a compact to $X$ factors through a
$Y$, compact or not, with $\pdim Y\leq n$, then every composite of
$n+1$ ghosts out of $X$ is phantom, so part~(3) implies part~(1).  For
the converse, in view of part~(2), it suffices to prove that every map
from a compact object to an $A$ with $\pdim A\leq n$ factors through a
compact $B$ with $\pdim B\leq n$.  we proceed by induction on $n$.
The base case $n=0$ is implied by Proposition~\ref{prop-Brown}.  For
the induction step, suppose we have a map $f\mathcolon
F\xrightarrow{}A$, where $F$ is compact and $\pdim A\leq n+1$.  Then
we have an exact triangle
\[
C\xrightarrow{r} P \xrightarrow{s} \widetilde{A} \xrightarrow{t} \Sigma C
\]
with $P$ projective, $\pdim C\leq n$, and $A$ is a retract of
$\widetilde{A}$.  It suffices to show that the composite 
\[
F\xrightarrow{}A\xrightarrow{}\widetilde{A}
\]
factors through a compact $B$ with $\pdim B\leq n+1$.  We can
therefore assume $A=\widetilde{A}$.  By the induction hypothesis, we
can write $tf=\phi h$ for some $h\mathcolon F\xrightarrow{}D$, where
$D$ is compact with $\pdim D\leq n$.  This gives us a map of exact
triangles 
\[
\begin{CD}
\Sigma^{-1}D @>>> Z @>>> F @>h>> D   \\
@V\Sigma^{-1}\phi VV @V\psi VV @VfVV @VV\phi V \\
C @>>r> P @>>s> A @>>t> \Sigma C
\end{CD}
\]
But then $Z$ is necessarily compact, so we can write $\psi=j\tau$,
where the codomain $Q$ of $\tau$ is compact projective.  By taking the
weak pushout (which amounts to applying the $3\times 3$ lemma), we get
the following diagram, whose rows are exact triangles. 
\[
\begin{CD}
\Sigma^{-1}D @>>> Z @>>> F @>h>> D   \\
@| @V\tau VV @V\sigma VV @| \\
\Sigma^{-1}D @>>> Q @>>> B' @>>> D \\
@V\Sigma^{-1}\phi VV @VjVV @V\rho VV @VV\phi V \\
C @>>r> P @>>s> A @>>t> \Sigma C
\end{CD}
\]
Now $B'$ is a compact object with $\pdim B'\leq n+1$, but
unfortunately $\rho \sigma$ need not be equal to $f$.  Nevertheless,
we do have
\[
t\rho \sigma =\phi h = tf,
\]
so $f-\rho \sigma =sq$ for some map $q\mathcolon F\xrightarrow{}P$.
But then $q=iq'$ for some map $q'\mathcolon F\xrightarrow{}Q'$, where
$Q'$ is compact projective.  Altogether then, we have 
\[
f=\rho \sigma +siq'.
\] 
This means that $f$ factors through the compact object $B'\amalg Q'$.
Indeed, $f$ is the composite 
\[
F \xrightarrow{(\sigma ,q')} B'\amalg Q' \xrightarrow{\rho + si} A.
\]
Since $\pdim (B'\amalg Q')\leq n+1$, the proof is complete. 
\end{proof}

\begin{corollary}\label{cor-compact}
Suppose $E$ is an $S$-algebra or an ordinary ring.  If $X$ is a
compact object of $\cat{D} (E)$, then 
\[
\fdim X= \cfdim X =\pdim X.
\]
In particular, $\ghdim E$ is the maximal \ulp constructible or not\urp
\ flat dimension of a compact object of $\cat{D} (E)$, or $\infty$ if
there is no such maximal dimension.
\end{corollary}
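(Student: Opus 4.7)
The plan is to chain together inequalities already in hand and observe that for compact domains the notion of a phantom map collapses. Specifically, Theorem~\ref{thm-flat} gives $\fdim X \leq \cfdim X$, and the remark immediately preceding Proposition~\ref{prop-flat-resolution} gives $\cfdim X \leq \pdim X$ (every projective is flat). So the content of the corollary is the reverse inequality $\pdim X \leq \fdim X$ when $X$ is compact.

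For this, I would argue as follows. Suppose $\fdim X \leq n$, with $X$ compact. By Definition~\ref{defn-flat-dim-again}, every composite
\[
X \xrightarrow{g_{1}} Z_{1} \xrightarrow{g_{2}} \dotsb \xrightarrow{g_{n+1}} Z_{n+1}
\]
of $n+1$ ghosts out of $X$ is phantom. But a phantom map $f$ with \emph{compact} domain $X$ must vanish: by definition $f \circ h = 0$ for every map $h\mathcolon A \to X$ with $A$ compact, and taking $A = X$ and $h = 1_{X}$ forces $f = 0$. Hence every composite of $n+1$ ghosts out of $X$ is the zero map, and by Proposition~\ref{prop-proj-dim} this says exactly that $\pdim X \leq n$. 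Combining with the two inequalities above yields
\[
\fdim X = \cfdim X = \pdim X.
\]

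The final sentence is then a straightforward bookkeeping step. Recall (from the paragraph just before Proposition~\ref{prop-proj-dim}) that $\ghdim E$ is the supremum of $\pdim X$ as $X$ ranges over compact objects. By the equality just proved, this supremum equals the supremum over compact $X$ of $\fdim X$ and also of $\cfdim X$, giving the stated characterization of $\ghdim E$ as the maximal (constructible or not) flat dimension over compact objects.

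No step really poses a difficulty; the only thing to notice is the slightly subtle point that, while phantom maps in general are genuinely nontrivial, the compactness of the domain immediately kills them. All real work has been done in Theorem~\ref{thm-flat} and Proposition~\ref{prop-proj-dim}.
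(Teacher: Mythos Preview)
Your proof is correct and matches the paper's argument essentially line for line: the paper also records $\fdim X\leq \cfdim X\leq \pdim X$, then observes that for compact $X$ a phantom composite of ghosts is null, so $\pdim X\leq \fdim X$. You have simply made explicit why ``phantom implies null'' for compact domain (precompose with the identity), which the paper leaves implicit.
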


\begin{proof}
We always have $\fdim X\leq \cfdim X\leq \pdim X$.  Suppose $\fdim
X=n$ and $X$ is compact.  Then every composition of $n+1$ ghosts out
of $X$ is phantom, hence null.  Thus $\pdim X\leq n$, so $\pdim X=\fdim X$.
\end{proof}

\begin{corollary}\label{cor-maximal}
Suppose $E$ is an $S$-algebra or an ordinary ring.  Then $\ghdim
E=\sup \fdim X$ as $X$ runs through arbitrary objects of $\cat{D}
(E)$.   
\end{corollary}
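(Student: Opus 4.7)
The plan is to prove equality by a sandwich argument, with the key observation being that ghosts are closed under left-composition with arbitrary maps.

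First I would establish the easy inequality $\ghdim E\leq \sup \fdim X$. By definition $\ghdim E$ is the supremum of $\pdim X$ for compact $X$, and Corollary~\ref{cor-compact} shows that $\pdim X=\fdim X$ on compact objects. So $\ghdim E$ is already the supremum of $\fdim X$ over compact objects, which is bounded above by the supremum over all $X$.

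The interesting direction is $\fdim X\leq \ghdim E$ for arbitrary $X$. Assume $\ghdim E=n<\infty$ (otherwise there is nothing to prove) and fix $X\in \cat{D}(E)$. We need every composite
\[
X \xrightarrow{g_{1}} X_{1} \xrightarrow{g_{2}} \dotsb \xrightarrow{g_{n+1}} X_{n+1}
\]
of $n+1$ ghosts with domain $X$ to be phantom. So take any $f\mathcolon A\xrightarrow{}X$ with $A$ compact; we must show $g_{n+1}\circ\dotsb \circ g_{1}\circ f=0$.

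The key point is that $g_{1}\circ f$ is itself a ghost: indeed, any map $\alpha\mathcolon \Sigma^{i}E\xrightarrow{}A$ gives $f\alpha\mathcolon \Sigma^{i}E\xrightarrow{}X$, and since $g_{1}$ is a ghost, $g_{1}f\alpha=0$. Therefore
\[
g_{n+1}\circ \dotsb \circ g_{2}\circ (g_{1}f)
\]
is a composite of $n+1$ ghosts with domain the compact object $A$. Since $\pdim A\leq \ghdim E=n$, Proposition~\ref{prop-proj-dim} forces this composite to vanish. Hence $g_{n+1}\circ \dotsb \circ g_{1}$ is phantom, giving $\fdim X\leq n$, as desired.

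There is no real obstacle here; the argument is essentially a one-line observation once one has Corollary~\ref{cor-compact} and the ghost-detection characterization of $\pdim$ from Proposition~\ref{prop-proj-dim}. The only thing to be careful about is the direction of ``ghost''—one needs that precomposing a ghost with \emph{any} map yields a ghost (which is immediate from the definition), not the other way around.
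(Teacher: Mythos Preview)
Your proof is correct, and it is genuinely more elementary than the paper's. The paper establishes the nontrivial inequality $\fdim X\leq n$ by invoking condition~(4) of Theorem~\ref{thm-flat}: it argues that in the universal coefficient spectral sequence for $\pi_{*}(X\wedge_{E}Z)$ every class lies in filtration $\leq n$, because $\pi_{*}(-\wedge_{E}Z)$ is a homology theory and hence every element is pushed forward from some compact $F$, where $\fdim F\leq n$ forces filtration $\leq n$ already. Your argument bypasses the spectral sequence entirely and works directly from Definition~\ref{defn-flat-dim-again}: the simple observation that a ghost precomposed with any map is still a ghost converts a length-$(n+1)$ ghost composite out of $X$, precomposed with $A\to X$, into a length-$(n+1)$ ghost composite out of the compact $A$, which vanishes by Proposition~\ref{prop-proj-dim}. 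This is shorter and uses only the ghost characterizations of $\pdim$ and $\fdim$, whereas the paper's route illustrates how the spectral sequence criterion can be used in practice. Both approaches ultimately rest on the same reduction to compacts, but yours makes that reduction at the level of maps rather than at the level of homology classes.
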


We do not know whether this corollary remains true for the
constructible flat dimension.  

\begin{proof}
Corollary~\ref{cor-compact} implies that 
\[
\ghdim E\leq \sup_{X} \fdim X.
\]
Now suppose $\ghdim E=n$, so that $\fdim F\leq n$ for all compact
objects $F$ of $\cat{D} (E)$.  Choose an arbitrary $X\in \cat{D} (E)$,
and consider the universal coefficient spectral sequence
\[
E^{2}_{s,t}=\Tor^{E_{*}}_{s,t} (X_{*},Z_{*})\Rightarrow \pi_{t-s}
(X\wedge_{E}Z)
\]
for an arbitrary left $E$-module $Z$.  We must show that
$E^{\infty}_{s,*}=0$ for $s>n$.  Since this spectral sequence is of
homological type, this means we must show that every element in
$\pi_{*} (X\wedge_{E}Z)$ is in filtration $n$ (and possibly lower
filtration as well).  But the functor $\pi_{*}
(-\wedge_{E}Z)$ is a homology theory on right $E$-modules, so 
\[
\pi_{*} (X\wedge_{E}Z) =\colim \pi_{*} (F\wedge_{E}Z),
\]
where the colimit is taken over all maps $F\xrightarrow{}X$ from a
compact object to $X$.  Then any element of $\pi_{*} (X\wedge_{E}Z)$
comes from some $\pi_{*} (F\wedge_{E}Z)$, where it lies in filtration
$n$.  Naturality of the spectral sequence implies that it also lies in
filtration $n$ in $\pi_{*} (X\wedge_{E}Z)$.
\end{proof}

We now point out another advantage of the ghost dimension; it is
left-right symmetric, like the usual weak dimension of rings.

\begin{theorem}\label{thm-weak-symmetric}
Suppose $E$ is an $S$-algebra or an ordinary ring.  Then 
\[
\ghdim E = \ghdim E^{\textup{op}}.
\]
\end{theorem}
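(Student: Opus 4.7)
The plan is to reduce the theorem, by Corollary~\ref{cor-maximal}, to showing that the maximum of $\fdim X$ as $X$ ranges over right $E$-modules equals the maximum of $\fdim Z$ as $Z$ ranges over right $E^{\textup{op}}$-modules, i.e., left $E$-modules. Thus it suffices to find a characterization of $\fdim$ that is manifestly symmetric in its left/right dependence.

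The key tool is part~(4) of Theorem~\ref{thm-flat}: $\fdim X \leq n$ if and only if for every left $E$-module $Z$, the universal coefficient spectral sequence
\[
E^{2}_{s,t}=\Tor^{E_{*}}_{s,t}(X_{*},Z_{*}) \Rightarrow \pi_{t-s}(X\wedge_{E}Z)
\]
has $E^{\infty}_{s,*}=0$ for all $s>n$. Combining this with Corollary~\ref{cor-maximal}, the statement $\ghdim E \leq n$ is equivalent to the condition that for every right $E$-module $X$ and every left $E$-module $Z$, the above spectral sequence has $E^{\infty}_{s,*}=0$ for $s>n$.

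Now I would observe that this condition is manifestly symmetric under interchange of left and right. On the one hand, there is a canonical identification $X\wedge_{E}Z \simeq Z\wedge_{E^{\textup{op}}}X$ in the derived category, obtained by regarding any right $E$-module as a left $E^{\textup{op}}$-module and vice versa; the derived smash product is defined so that this holds. On the other hand, there is a corresponding algebraic symmetry $\Tor^{E_{*}}_{s,t}(X_{*},Z_{*})\cong \Tor^{E_{*}^{\textup{op}}}_{s,t}(Z_{*},X_{*})$. Running through the construction of the spectral sequence recalled in the proof of Theorem~\ref{thm-flat} — taking a resolution of $X$ by projective $E$-modules, smashing with $Z$, and taking homotopy — one sees by comparing with the analogous resolution of $Z$ by projective $E^{\textup{op}}$-modules that these two identifications are compatible with the filtrations, so the two spectral sequences agree. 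Hence the condition on $E^{\infty}_{s,*}$ is the same whether interpreted over $E$ or over $E^{\textup{op}}$, and we conclude $\ghdim E \leq n \iff \ghdim E^{\textup{op}} \leq n$, as desired.

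The main obstacle is verifying that the two universal coefficient spectral sequences really do coincide under the left/right swap, including the filtrations. For ordinary rings this is standard (both spectral sequences arise from resolving either factor by projectives), and for $S$-algebras it follows from the construction in~\cite[Section~IV.5]{elmendorf-kriz-mandell-may}, but one should pause to confirm that the comparison map is well-defined and compatible with filtration; assuming this, the rest of the proof is formal.
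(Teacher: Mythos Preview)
Your proposal is correct and follows essentially the same route as the paper: both arguments combine Corollary~\ref{cor-maximal} with Theorem~\ref{thm-flat}(4) to rewrite $\ghdim E\leq n$ as the vanishing of $E^{\infty}_{s,*}$ for $s>n$ in the universal coefficient spectral sequence for $\pi_{*}(X\wedge_{E}Z)$, for all $X$ and $Z$, and then invoke the left-right symmetry of that condition. The paper's proof is a two-sentence version of yours, simply asserting that this characterization is ``obviously left-right symmetric''; you are more scrupulous in noting that one must check the spectral sequence obtained by resolving $X$ agrees (at least at the $E^{\infty}$ level, with its filtration) with the one obtained by resolving $Z$, which is the standard balancing argument for $\Tor$ in the derived/spectral setting.
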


\begin{proof}
The ghost dimension of $E$ is the largest $n$ such that there exists a
right $E$-module $X$ and a left $E$-module $Y$ for which
$E^{\infty}_{n,*}$ is nonzero in the universal coefficient spectral
sequence for $\pi_{*} (X\wedge_{E}Y)$.  This is obviously left-right
symmetric.  
\end{proof}

Summing up, then, we are left with three possible definitions for the
weak dimension of an $S$-algebra $E$.  We list the basic inequalities
between these definitions in the following theorem, which also proves
the main conjecture of~\cite{hovey-lockridge-ghost} that $\ghdim
R=\wdim R$ for ordinary rings $R$.  

\begin{theorem}\label{thm-summary}
Suppose $E$ is an $S$-algebra or an ordinary ring.  Then we have 
\begin{gather*}
\ghdim E = \sup_{X \textup{compact}} \cfdim X = \sup_{X \textup{compact}}
\pdim X \\
= \sup_{X \textup{compact}} \fdim X = \sup_{X \textup{arbitrary}}
\fdim X.  
\end{gather*}
Furthermore, 
\[
\ghdim E \leq \sup_{X \textup{arbitrary}} \cfdim X \leq \wdim E_{*}
\]
with equality if $E$ is an ordinary ring.  Finally, 
\[
\ghdim E \leq \Rdim E
\]
with equality if $E_{*}$ is coherent.  
\end{theorem}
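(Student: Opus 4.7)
My plan is to assemble the already-proven results, leaving one genuinely new argument that handles the ordinary-ring equality. First, the four expressions in the opening chain all equal $\ghdim E$: by definition $\ghdim E = \sup_{X\,\text{compact}} \pdim X$; by Corollary~\ref{cor-compact} the invariants $\pdim$, $\cfdim$, and $\fdim$ coincide on any compact object; and Corollary~\ref{cor-maximal} upgrades the supremum of $\fdim$ to arbitrary objects of $\cat{D}(E)$.

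Next, I would address the middle chain $\ghdim E \leq \sup_{X}\cfdim X \leq \wdim E_{*}$. The left inequality is immediate from $\fdim X \leq \cfdim X$ (Theorem~\ref{thm-flat}) together with Corollary~\ref{cor-maximal}, and the right is exactly Proposition~\ref{prop-flat-resolution}. The final inequality $\ghdim E \leq \Rdim E$, with equality when $E_{*}$ is coherent, is Lemma~\ref{lem-Rouquier} combined with Proposition~\ref{prop-coherent}.

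The substantive step, which is also the main obstacle, is the equality $\ghdim R = \wdim R$ for an ordinary ring $R$, which recovers the main conjecture of \cite{hovey-lockridge-ghost}. Since $\ghdim R \leq \wdim R$ is already in hand, it suffices to prove $\wdim R \leq \ghdim R$. Given a right $R$-module $M$ with $\fdim_{R} M = n$, pick a left $R$-module $N$ with $\Tor^{R}_{n}(M,N) \neq 0$, and view both as chain complexes concentrated in degree zero. In the universal coefficient spectral sequence
\[
E^{2}_{s,t} = \Tor^{R}_{s,t}(M,N) \Rightarrow \pi_{t-s}(M\wedge_{R}N)
\]
of Theorem~\ref{thm-flat}(4), the graded modules $M_{*}$ and $N_{*}$ are concentrated in internal degree $*=0$, so $E^{2}$ is supported on the single row $t=0$. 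Every differential $d_{r}$ with $r \geq 2$ changes $t$ by at least one, so it connects positions in distinct rows and hence vanishes; the spectral sequence therefore collapses, and $E^{\infty}_{n,0} = \Tor^{R}_{n}(M,N) \neq 0$. By Theorem~\ref{thm-flat}(4) this forces $\fdim M \geq n$, and passing to the supremum over such $M$ yields $\ghdim R \geq \wdim R$. The collapse observation is the one conceptual point beyond direct citation; everything else in the theorem reduces to invoking an earlier result.
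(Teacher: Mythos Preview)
Your assembly of the previously proven pieces is correct and matches the paper exactly: Corollary~\ref{cor-compact}, Corollary~\ref{cor-maximal}, Theorem~\ref{thm-flat}, Proposition~\ref{prop-flat-resolution}, Lemma~\ref{lem-Rouquier}, and Proposition~\ref{prop-coherent} cover everything except the equality $\ghdim R=\wdim R$ for an ordinary ring.

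For that remaining inequality $\wdim R\leq \ghdim R$, you take a different route from the paper. The paper simply invokes the main result of~\cite{hovey-lockridge-ghost}, where this inequality was already established, and observes that combining it with the chain $\ghdim R\leq \sup_{X}\cfdim X\leq \wdim R$ closes the loop. You instead give a self-contained argument internal to the present paper: for a module $M$ with $\fdim_{R}M=n$ and a witness $N$ with $\Tor^{R}_{n}(M,N)\neq 0$, the universal coefficient spectral sequence for $M\wedge_{R}N$ is concentrated on the line $t=0$ and therefore degenerates, so $E^{\infty}_{n,0}\neq 0$; the equivalence in Theorem~\ref{thm-flat}(4) then forces $\fdim M\geq n$ in $\cat{D}(R)$, and Corollary~\ref{cor-maximal} gives $\ghdim R\geq n$. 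This is correct: the degree-zero concentration kills all differentials $d_{r}$ for $r\geq 2$, and the contrapositive of part~(4) is exactly what you need. Your approach has the virtue of not appealing to the earlier paper at all---it shows that the machinery developed here (specifically the spectral-sequence characterization of $\fdim$) already suffices to recover $\wdim R\leq \ghdim R$, which is arguably tidier than the paper's citation. The paper's approach, on the other hand, emphasizes that the new contribution is the reverse inequality $\ghdim R\leq \wdim R$, with the other direction already on record.
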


\begin{proof}
The only thing we have not already proved is that equality holds in
the first chain of inequalities when $R$ is an ordinary ring.  But the
main result of~\cite{hovey-lockridge-ghost} is that $\wdim R\leq
\ghdim R$, giving us the desired equalities.  
\end{proof}

\providecommand{\bysame}{\leavevmode\hbox to3em{\hrulefill}\thinspace}
\providecommand{\MR}{\relax\ifhmode\unskip\space\fi MR }
\providecommand{\MRhref}[2]{%
  \href{http://www.ams.org/mathscinet-getitem?mr=#1}{#2}
}
\providecommand{\href}[2]{#2}

\end{document}